\newtheorem{theorem}{Theorem}[section]
\newtheorem{corollary}[theorem]{Corollary}
\newtheorem{lemma}[theorem]{Lemma}
\newtheorem{proposition}[theorem]{Proposition}
\newtheorem{question}[theorem]{Question}
\newtheorem{definition}[theorem]{Definition}
\newtheorem{example}[theorem]{Example}
\numberwithin{equation}{subsection}
\newtheorem*{ack}{Acknowledgement}
\newcommand{\Alex}{\operatorname{Alex}}
\newcommand{\Aut}{\operatorname{Aut}}
\newcommand{\Conj}{\operatorname{Conj}}
\newcommand{\Core}{\operatorname{Core}}
\newcommand{\R}{\operatorname{R}}
\newcommand{\GL}{\operatorname{GL}}
\def\bea{\begin{eqnarray}}
\def\eea{\end{eqnarray}}
\begin{document}
\title{Multi-groups}

\author{Tatyana A. Kozlovskaya\footnote{t.kozlovskaya@math.tsu.ru}}

\affil{\small Regional Scientific and Educational Mathematical Center of Tomsk State University,
36 Lenin Ave., 634050, Tomsk, Russia.  
}

%\subjclass[2010]{Primary 20N20; Secondary  Secondary 16S34, 05E30}
%\keywords{Multi-set, multivalued group,  $n$-valued dynamic, growth function}

\maketitle

\begin{abstract}
In the present paper we define  homogeneous algebraic systems. 
Particular cases of these systems are: semigroup (monoid, group) system. These algebraic systems were studied by  J.~Loday, A.~Zhuchok, T.~Pirashvili, N.~Koreshkov.
Quandle systems were introduced and studied by V.~Bardakov, D.~Fedoseev, V.~Turaev.

We construct  some group systems on the set of square matrices over a field $\Bbbk$. Also we define rack system  on the set $V \times G$, where $V$ is a vector space of dimension 
$n$ over $\Bbbk$, $G$ is a subgroup of $\GL_n(\Bbbk)$.Finally, we find  connection between skew braces and dimonoids.

\end{abstract}

\emph{Key words: }algebraic system, group, monoid, semigroup system, groupoid, multi-group, quadle system, multi-quandle, dimonoid, homogeneous algebraic system, multy-rack, skew brace.

\emph{Mathematics Subject Classification}: 20M05, 08B20 

\tableofcontents

\section{Introduction}

In the theory of algebraic systems there exist algebraic systems with  a set of one type algebraic operations. Let us give some examples of these algebraic systems.

Brace  (skew brace)  is a set with two group operations, which satisfy some axiom (see \cite{Rump}-\cite{GV}). A generalization of skew braces was suggested in the paper Bardakov-Neshchadim-Yadav \cite{BNY}, where were introduced brace systems as a set with a family of group operations which connected by some axioms.

Dimonoids were introduced by J.-L.~Loday \cite{Loday2} in his construction of a universal enveloping algebra for the Leibniz algebra. Dimonoid is a set with two semigroup operations, which are connected by a set of axioms. The construction of a free dimonoid generated by a given set was presented in \cite{Loday2} and applied to the study of free dialgebras and a cohomology of dialgebras. Structural properties of free dimonoids have been investigated by A.~V.~Zhuchok in \cite{Zhuchok1}.
In \cite{Zhuchok2} was presented a construction of a free product of arbitrary dimonoids which generalizes a free dimonoid and describe its structure. 
Dimonoids are examples of duplexes, which were introduced by T.~Pirashvili in \cite{Pir}. A duplex is an algebraic system with two associative binary operations (without added connections between these operations). %The set of all permutations, gives an example of a duplex which is not a dimonoid.
T.~Pirashvili  constructed a free duplex generated by a given set via planar trees and proved that the set of all permutations forms a free duplex on an explicitly described set of generators.

In \cite{Kor}, N.~Koreshkov introduced $n$-tuple semigroup as  an algebraic system 
$$
\mathcal{S} = (S, *_i, i \in I)
$$
 such that $(S, *_i)$ is a semigroup for any $i \in I$
and with the following axiom which connects these operations,
$$
(a *_i b) *_j c = a *_i (b *_j c),~~ a, b, c \in S,~~i, j \in I. 
$$
The free $n$-tuple semigroup of an arbitrary rank was first constructed in  \cite{Zhuchok3}.

In the present paper we define  homogeneous algebraic systems (see Definition \ref{HAS}). Particular cases of these systems are: semigroup (monoid, group) system $\mathcal{G} = (G, *_i, i \in I)$, where $(G, *_i)$ is a semigroup (monoid, group) for any $i \in I$. An example of a semigroup system with two operations is a duplex. We will call  $\mathcal{G}$ by multi-semigroup (multi-monoid, multi-group) if the operations are  connected by the following condition
$$
(a *_i b) *_j c = a *_i (b *_j c),~~ a, b, c \in G,~~i, j \in I. 
$$
An example of a multi-semigroup with $n$ operations is an $n$-tuple semigroup \cite{Kor}.

V. G. Bardakov and D. A.~Fedoseev considered quandle system $\mathcal{Q} = (Q, *_i, i \in I)$ in \cite{BF}, where  $(Q, *_i)$ is a quandle for any $i \in I$, and defined a multiplication $*_i *_j$ of the operations $*_i$ and $*_j$ by the rule
$$
p (*_i *_j) q = (p *_i q) *_j q,~~p, q \in Q.
$$
In the general case the algebraic system $(Q, *_i *_j)$ is not a quandle, but if the operations are satisfied the axioms
\begin{equation} \label{quand}
(x *_i y) *_j z= (x *_j z) *_i (y *_j z),~~(x *_j y) *_i z= (x *_i z) *_j (y *_i z),~~~x, y, z \in Q,
\end{equation}
then  $(Q, *_i *_j)$ and  $(Q, *_j *_i)$ are quandles.
 V.~Turaev called quandle systems, which satisfy axioms (\ref{quand}) for all $i, j \in I$ by multi-quandles and gave them a  topological interpretation (see \cite{T}).

In 1971, V.~M.~Buchstaber and S.~P.~Novikov \cite{BN} introduced a notion of $n$-valued group,  in which the product of each pair of elements is an $n$-multi-set, the set of $n$ elements with multiplicities.  An appropriate survey on $n$-valued groups and its applications  can be found in \cite{B}.

If we have a group system $\mathcal{G} = (G, *_i, i \in I)$, where $|I| = n$, we can can define $n$-valued multiplication 
$$
a * b = [a *_1 b, a *_2 b, \ldots,  a *_n b],~~a, b \in G,
$$ 
and study the algebraic system $(G, *)$. In \cite{BKT} were investigated connections between group systems and $n$-valued groups and proved that if all groups 
$(G, *_i)$ have the general unit and $(G, *)$ is an $n$-valued group, then $*_i = *_j$ for all $1 \leq i, j, \leq n$.

In the present paper we are studying connections between skew braces and dimonoids, define a semigroup systems on the set of square matrices. Also, we investigate semigroup systems on the set of matrices $M_n(\Bbbk)$ and give an answer on a question from \cite{BKT}. Also, we construct some rack systems and multi-racks on the set 
$V \times G$, where $V$ is a vector space of dimension $n$ over a field $\Bbbk$, $G$ is a subgroup of $\GL_n(\Bbbk)$.

The paper is organized as follows.

In Section \ref{prelim1} we introduce definition of algebraic systems, which include the algebraic systems from the introduction. 

In Section \ref{sec-prelim} we construct some group systems on the set of square matrices over a field and give an answer on a question from \cite{BKT}.

In Section \ref{R} will be define rack system  on the set $V \times G$, $G \leq  \GL_n(\Bbbk)$.

In Section \ref{dim} will be established connection between skew braces and dimonoids.

\bigskip

%%%%%%%%%%%%%%%%%%%%%%%%%%%%%%%%%%%%%%%%%%%%%%%%%%%%5

\section{Homogeneous algebraic systems } \label{prelim1}

In this section we introduce definition of algebraic systems, which include the algebraic systems from the introduction.  

\begin{definition}  \label{HAS}
Let $\mathcal{A} = (A, f_i, i \in I)$ be an algebraic system with a set of algebraic operations $f_i$ of arity $n_i$. It is said to be $m$-homogeneous $I$-system if all arities $n_i$ are equal to $m$. In particular, if $|I|=n$, we will say on $n$-system. If $m = 2$ we will say instead $2$-homogeneous $I$-system on groupoid  $I$-system or simply on groupoid system.   
\end{definition} 

The typical example is a ring $(K, +, \cdot)$ that is groupoid 2-system.
Other examples of $2$-homogeneous $I$-system are: semigroup (monoid, group) system $\mathcal{G} = (G, *_i, i \in I)$, where $(G, *_i)$ is a semigroup (monoid, group) for any $i \in I$. An example of a semigroup system with two operations is a duplex. We will call a system $\mathcal{G}$ by multi-semigroup (multi-monoid, multi-group) if the operations are  connected by the following condition
$$
(a *_i b) *_j c = a *_i (b *_j c),~~ a, b, c \in S,~~i, j \in I. 
$$
An example of a multi-semigroup with $n$ operations is an $n$-tuple semigroup (see the introduction).

Let us give other examples of semigroup systems. 

{\it Skew braces} (see \cite{Rump},  \cite{GV}).
A  triple $(G, \cdot, \circ)$, where $(G, \cdot)$ and $(G, \circ)$ are  groups,    is said to be a \emph{skew (left) brace} if
 \begin{equation}
 g_1 \circ (g_2 \cdot g_3) =  (g_1 \circ g_2) \cdot g_1^{-1} \cdot  (g_1 \circ g_3)
 \end{equation}
 for all $g_1, g_2, g_3 \in G$, where $ g_1^{-1}$ denotes the  inverse of $g_1$ in $(G, \cdot)$. We call  $(G, \cdot)$ the \emph{additive group} and $(G, \circ)$ the \emph{multiplicative  group} of the skew left brace $(G, \cdot, \circ)$. A skew left brace $(G, \cdot, \circ)$ is said to be a \emph{(left) brace} if $(G, \cdot)$ is an abelian group. In this case we will use the notation $+$ instead $\cdot$ in additive group. We see that a skew left brace is an example of group system with 2 operations.

{\it Dimonoids} (see \cite{Loday2},  \cite{Loday1}).
A  dimonoid  is a set X together with two binary operations $\vdash$ and $\dashv$ satisfying the following axioms:

$$\begin{cases} x \dashv  (y \dashv  z) \overset{1}{=} (x \dashv  y) \dashv  z\overset{2}{=}x \dashv (y \vdash z), &\\ (x \vdash y) \dashv  z \overset{3}{=} x \vdash (y \dashv z),&\\ (x \dashv y) \vdash z \overset{4}{=} x \vdash (y \vdash z) \overset{5}{=} (x \vdash y) \vdash z.&\end{cases} $$
for all $x,y$ and $z \in X$.
Observe that relations $1$ and $5$ are the ``associativity''  of the products $\vdash$
and $\dashv$ respectively. 
 The typical examples of  dimonoid are the following. 

a) Let $M$ be a monoid. Put $D = M \times M$ and define the products by 

$$(m, n) \dashv (m^{\prime}, n^{\prime}):= (m, nm^{\prime} n^{\prime}), $$
$$(m, n)  \vdash (m^{\prime}, n^{\prime}) := (mnm^{\prime}, n^{\prime}).$$
With these definitions $D = (D, \dashv, \vdash)$ is a dimonoid. Let us check relation 3 for
instance:

$$((m,n) \vdash  (m^{\prime}, n^{\prime})) \dashv (m^{\prime \prime}, n^{\prime \prime}) = (mnm^{\prime}, n^{\prime} ) \dashv (m^{\prime \prime}, n^{\prime \prime}) = (mnm^{\prime}, n^{\prime} m^{\prime \prime} n^{\prime \prime})$$
$$ (m,n) \vdash  ((m^{\prime}, n^{\prime}) \dashv (m^{\prime \prime}, n^{\prime \prime})) =  (m,n) \vdash (m^{\prime}, n^{\prime} m^{\prime \prime} n^{\prime \prime}) = (mnm^{\prime}, n^{\prime} m^{\prime \prime} n^{\prime \prime}).$$

Let $1 \in M$  be a unit for $M$. Then $e=(1,1)$ is a bar-unit for $D$, but one has $e \dashv x \neq x$ and $x \vdash x  \neq x$ in $D$ in general. For any invertible element $m$ the element $(m, m^{-1} ) \in D$ is a bar-unit.

b) Let $G$ be a group and $X$ be a $G$-set. The following formulas define a dimonoid structure on $X \times G$:

$$ (x,g) \dashv (y,h):=(x, gh),$$
$$ (x,g) \vdash (y,h):=(g\cdot x, gh).$$

We see that a dimonoid is an example of group system  with 2 operations.

\bigskip

%%%%%%%%%%%%%%%%%%%%%%%%%%%%%%%%%%%%%%%%%%%

\section{Group systems and multi-groups} \label{sec-prelim}

Let $M_n(\Bbbk)$ be a set of $n \times n$ matrices  over a field $\Bbbk$. 
The next multiplication was defined in \cite{BKT}:
$$
A *_{s,t,M_1, M_2} B = s A M_1 B + t A M_2 B,~~~s, t \in \Bbbk,~~M_1, M_2 \in M_n(\Bbbk),
$$
and was formulated

\begin{question}
What can we say on this product? What algebraic systems one can construct using these  operations? Is there connection of these operations with non-standard matrix 
multiplications which were  studied in \cite{BS}?
\end{question}

Let us find conditions under which $(M_{n}(\Bbbk), *_{s,t,M_1, M_2} )$ is a semigroup. It is need to check axiom of associativity,
$$(A*B)*C= A*(B*C),~~A, B, C \in M_{n}(\Bbbk).$$
We have
$$(A*B)*C= (s A M_1 B + t A M_2 B)*C=s (s A M_1 B + t A M_2 B) M_1 C + t (s A M_1 B + t A M_2 B) M_2 C,$$
on the other side,
 $$A*(B*C)=A*(s B M_1 C + t B M_2 C)= s A M_1 ( s B M_1 C + t B M_2 C)  + tAM_2 (s B M_1 C + t B M_2 C).$$
We have a system
$$\begin{cases} AM_1 B M_1 C= AM_1BM_1C;  &\\ AM_2BM_1C+AM_1BM_2C=AM_1BM_2C+AM_2BM_1C;&\\ AM_2BM_2C=AM_2BM_2C.&\end{cases} $$
It is easy to see that the associativity axiom holds. Hence, we have proven the next lemma.

\begin{lemma}
The operation $*_{s,t,M_1, M_2}$ is associative.
\end{lemma}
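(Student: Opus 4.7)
The plan is essentially to execute the bilinear expansion the author has already sketched, verifying that the two sides agree term by term once we group by the scalar coefficients $s^2$, $st$, $t^2$.

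First I would expand the left-hand side $(A * B) * C$ by substituting $A * B = sAM_1B + tAM_2B$ into the outer product; since the operation is $\Bbbk$-bilinear in its two arguments (matrix multiplication distributes over matrix addition, and the scalars in $\Bbbk$ commute with multiplication by matrices on either side), this yields the four-term expression
$$
s^2\, AM_1BM_1C + st\, AM_2BM_1C + st\, AM_1BM_2C + t^2\, AM_2BM_2C.
$$
Here I use the associativity of matrix multiplication to remove the brackets in each monomial, which is the only structural fact about $M_n(\Bbbk)$ needed. Then I would do the symmetric expansion of $A * (B * C)$, producing the same four monomials with the same scalar coefficients, and compare.

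Matching coefficients of $s^2$, $st$ (two terms), and $t^2$ on the two sides reduces to the three identities in the system displayed by the author. The $s^2$ and $t^2$ identities are tautologies, and the $st$ identity is just commutativity of addition of the two matrices $AM_1BM_2C$ and $AM_2BM_1C$ in $M_n(\Bbbk)$. Hence both sides are equal, and the operation is associative.

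There is no real obstacle: the proof rests entirely on the associativity of ordinary matrix multiplication together with the bilinearity of $*_{s,t,M_1,M_2}$, and the potentially worrying mixed term is harmless because we are summing, not comparing, $AM_1BM_2C$ and $AM_2BM_1C$. I would simply present the two expansions explicitly and note the one-line conclusion from additive commutativity, since the author's displayed system already carries the essential content.
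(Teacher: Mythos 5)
Your proposal is correct and follows essentially the same route as the paper: expand both $(A*B)*C$ and $A*(B*C)$ using bilinearity and associativity of matrix multiplication, and observe that the resulting system of coefficient identities (for $s^2$, $st$, $t^2$) holds trivially, the only nontrivial point being commutativity of addition in the mixed term. No gap; your presentation is if anything slightly more explicit than the paper's about why the mixed $st$ terms match.
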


\begin{corollary}
The algebraic system $(M_n(\Bbbk), *_{s,t,M_1, M_2}, s, t \in \Bbbk, M_1, M_2 \in M_n(\Bbbk))$ is a semigroup system.
\end{corollary}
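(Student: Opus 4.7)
The plan is to read off the corollary directly from the preceding Lemma together with the definition of a semigroup system given at the start of Section~\ref{prelim1}. By the Lemma, for each choice of parameters $s, t \in \Bbbk$ and $M_1, M_2 \in M_n(\Bbbk)$, the binary operation $*_{s,t,M_1,M_2}$ is associative on $M_n(\Bbbk)$, so $(M_n(\Bbbk), *_{s,t,M_1,M_2})$ is a semigroup in the usual sense.

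Now I would invoke the definition: a semigroup system is a $2$-homogeneous $I$-system $\mathcal{G} = (G, *_i, i \in I)$ such that $(G, *_i)$ is a semigroup for every $i \in I$. Taking the index set
\[
I = \Bbbk \times \Bbbk \times M_n(\Bbbk) \times M_n(\Bbbk),
\]
with the operation assigned to $i = (s,t,M_1,M_2) \in I$ being precisely $*_{s,t,M_1,M_2}$, every component operation is associative by the Lemma. Thus $(M_n(\Bbbk), *_{s,t,M_1,M_2}, (s,t,M_1,M_2) \in I)$ satisfies the defining condition of a semigroup system, which is the claim.

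There is no genuine obstacle here: the corollary is a definitional consequence of the Lemma, so no further calculation is needed. The only bookkeeping is to verify that we are in the setting of a $2$-homogeneous $I$-system, which is immediate since each $*_{s,t,M_1,M_2}$ is a binary operation on the common underlying set $M_n(\Bbbk)$.
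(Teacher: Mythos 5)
Your proposal is correct and matches the paper's (implicit) argument: the corollary is stated as an immediate consequence of the lemma that each $*_{s,t,M_1,M_2}$ is associative, combined with the definition of a semigroup system from Section~\ref{prelim1}. Your explicit identification of the index set $I = \Bbbk \times \Bbbk \times M_n(\Bbbk) \times M_n(\Bbbk)$ is a small but welcome clarification that the paper leaves tacit.
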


Let us check, is this semigroup system a multi-semigroup? To do it it is need to take 
 two different operations: $ *= *_{s,t,M_1, M_2}$, $ \circ= \circ_{p,q,N_1, N_2}$ and check the  axiom
$$
(A*B)\circ C=A*(B\circ C).
$$
The left hand side:
$$(A*B)\circ C = (s A M_1 B + t A M_2 B)\circ C =  p (s A M_1 B + t A M_2 B) N _1 C + q (s A M_1 B + t A M_2 B) N_2 C.$$
The right hand side:
 $$A*(B\circ C)= A*(p B N_1 C + q B N_2 C)= s A M_1 ( p B N_1 C + q B N_2 C)  + tAM_2 (p B N_1 C + q B N_2 C).$$
We get a system

$$
\begin{cases} AM_1 B N_1 C= AM_1BN_1C;  &\\  AM_2 B N_1 C= AM_2BN_1C;  &\\  AM_1 B N_2 C= AM_1BN_2C; &\\  AM_2 B N_2 C= AM_2BN_2C. &\end{cases} 
$$
Since, this system true for all matrices, we get

\begin{proposition}
The semigroup system $(M_n(\Bbbk), *_{s,t,M_1, M_2}, s, t \in \Bbbk, M_1, M_2 \in M_n(\Bbbk))$  is a multi-semigroup.
\end{proposition}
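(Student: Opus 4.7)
The plan is to verify the multi-semigroup axiom
\[
(A*B)\circ C = A*(B\circ C)
\]
directly from the defining formulas, by expanding both sides using the distributivity of matrix multiplication over addition and the bilinearity of each operation in its two arguments.

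First I would fix two arbitrary operations $* = *_{s,t,M_1,M_2}$ and $\circ = *_{p,q,N_1,N_2}$ from the family, and three matrices $A,B,C \in M_n(\Bbbk)$. On the left-hand side, I first apply $*$ and then $\circ$, getting
\[
(A*B)\circ C = p(sAM_1B + tAM_2B)N_1 C + q(sAM_1B + tAM_2B)N_2 C,
\]
and expanding this yields a sum of four terms $spAM_1BN_1C + tpAM_2BN_1C + sqAM_1BN_2C + tqAM_2BN_2C$. On the right-hand side, I first apply $\circ$ and then $*$, obtaining
\[
A*(B\circ C) = sAM_1(pBN_1C + qBN_2C) + tAM_2(pBN_1C + qBN_2C),
\]
which expands to exactly the same four terms, possibly in a different order.

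The four monomials $AM_iBN_jC$ for $i,j\in\{1,2\}$ appear with identical scalar coefficients on the two sides, so the two expressions coincide for all $A,B,C$, which is precisely the required multi-semigroup identity. I do not expect any real obstacle: matrix multiplication is associative and $\Bbbk$-bilinear, so the verification reduces to comparing the four cross-terms (unlike the pure associativity check for a single operation, where only two mixed terms $AM_1BM_2C$ and $AM_2BM_1C$ appear and happen to sum to themselves trivially, here the distinct choice of $M_i$ on the left and $N_j$ on the right makes each of the four cross-terms unambiguously match). Combined with the preceding corollary, this shows that the family is not merely a semigroup system but a multi-semigroup.
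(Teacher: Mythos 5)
Your proposal is correct and follows essentially the same route as the paper: expand both sides of $(A*B)\circc C$ --- sorry, of $(A*B)\circ C = A*(B\circ C)$ --- using bilinearity and associativity of ordinary matrix multiplication, and observe that the four monomials $AM_iBN_jC$ occur with the same coefficients $sp,\ tp,\ sq,\ tq$ on each side. The paper records this as a system of four identities that hold trivially for all matrices, which is exactly your coefficient comparison.
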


Let us find the unit element:
$$
A*X=s A M_1 X + t A M_2 X=A.
$$
 It means that $t=0, s=1, M_1X=E \Rightarrow  X= M_1^{-1}.$

Hence,  $A*X= AM_1X,  E^{(*)}=M_1^{-1}$.
On the other side $X*A=XM_1A=A$.

\begin{lemma}
We will have the unit element only for multiplication
 $A*B=AMB$, ${det M}{\neq }0$, $ E^{(*)}=M^{-1}.$
\end{lemma}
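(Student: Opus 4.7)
The plan is to first observe that, by distributivity of matrix multiplication over addition, the two-parameter operation collapses to a single-matrix operation: setting $M := s M_1 + t M_2 \in M_n(\Bbbk)$, one has
$$
A *_{s,t,M_1,M_2} B = s A M_1 B + t A M_2 B = A (s M_1 + t M_2) B = A M B.
$$
So the question ``when does $*_{s,t,M_1,M_2}$ admit a two-sided unit?'' is equivalent to ``for which $M \in M_n(\Bbbk)$ does $(A,B) \mapsto A M B$ admit a two-sided unit?''. This framing is what makes the statement of the lemma meaningful, and I would begin the proof with this reduction.

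Next I would establish necessity. Suppose $E \in M_n(\Bbbk)$ is a two-sided unit for $A * B = A M B$. The equation $A * E = A$, i.e.\ $A M E = A$, must hold for every $A \in M_n(\Bbbk)$; specializing to $A = I_n$ yields $M E = I_n$. In the same way, $E * A = E M A = A$ for all $A$, so taking $A = I_n$ gives $E M = I_n$. Together these say that $E$ is both a left and right inverse of $M$, hence $M$ is invertible, $\det M \neq 0$, and $E = M^{-1}$.

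For sufficiency, suppose $\det M \neq 0$ and put $E := M^{-1}$. Then for every $A \in M_n(\Bbbk)$ one checks directly that $A * E = A M M^{-1} = A$ and $E * A = M^{-1} M A = A$, so $E$ is indeed the two-sided unit.

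The only real content is the opening identification $s A M_1 B + t A M_2 B = A(s M_1 + t M_2) B$; after that, the two substitutions $A = I_n$ do all the work, and no step is genuinely obstructive. The main thing to be careful about is not confusing a one-sided computation (which only forces $M E = I_n$) with the full two-sided requirement, which is what actually forces invertibility of $M$ and pins down $E$ uniquely.
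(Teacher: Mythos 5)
Your proof is correct, and it takes a genuinely different route from the paper's. The paper works term by term with the two summands $sAM_1X$ and $tAM_2X$ and concludes from $A*X=A$ that one must have $t=0$, $s=1$, $M_1X=E$, i.e.\ it forces the coefficients individually; your proof instead collapses the sum by distributivity, $sAM_1B+tAM_2B=A(sM_1+tM_2)B$, sets $M=sM_1+tM_2$, and then characterizes exactly when $A*B=AMB$ has a two-sided unit by substituting $A=I_n$ on each side. Your reduction is the stronger and cleaner reading of the lemma: the paper's ``it means that $t=0$, $s=1$'' does not actually follow (for instance $s=t=1$, $M_1=M_2=\tfrac12 I_n$ gives ordinary matrix multiplication, which certainly has a unit), whereas your version --- a unit exists if and only if $sM_1+tM_2$ is invertible, in which case it equals $(sM_1+tM_2)^{-1}$ --- is both precise and complete, including the sufficiency direction that the paper omits. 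The only thing your argument gives up is the paper's (apparent) intent to single out the one-term multiplications $A*B=AM_1B$ as the privileged case, but since that restriction is not forced by the unit axiom, your formulation is the defensible one.
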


Inverse element
$A*Y=E^{(*)}\Leftrightarrow  AMY=M^{-1}.$ Hence, $Y=M^{-1}A^{-1}M^{-1}.$

\begin{theorem}
1) Let $M \in M_n(\Bbbk)$,  ${det M}{\neq }0$. Then $(GL_n(\Bbbk), *_M)$ is a group with the product $A *_M B = A M B$, with the unit element $E^{(*)}=M^{-1}$ and the inverse  $\bar{A}^{(*)}=M^{-1}A^{-1}M^{-1}.$

2) The algebraic system $(GL_n(\Bbbk), *_M, M\in GL_n(\Bbbk))$ is a multi-group.
\end{theorem}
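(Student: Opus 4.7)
The plan is to invoke the earlier lemma and proposition and then fill in the three remaining group axioms (closure, unit, inverse) by direct computation, since associativity for $*_M$ is already a special case of the associativity of $*_{s,t,M_1,M_2}$ (take $s=1$, $t=0$, $M_1=M$, $M_2$ arbitrary).

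For part 1, I would first observe closure: if $A,B\in GL_n(\Bbbk)$ and $\det M\neq 0$, then $A*_M B=AMB$ is a product of three invertible matrices, hence invertible, so $*_M$ is a well-defined binary operation on $GL_n(\Bbbk)$. Associativity follows from the lemma cited above. For the unit, I would verify both
\[
A*_M M^{-1}=AM M^{-1}=A,\qquad M^{-1}*_M A=M^{-1}M A=A,
\]
so $E^{(*)}=M^{-1}$ is a two-sided identity. For the inverse, set $\bar A^{(*)}=M^{-1}A^{-1}M^{-1}$ (which lies in $GL_n(\Bbbk)$) and check
\[
A*_M \bar A^{(*)}=A M\bigl(M^{-1}A^{-1}M^{-1}\bigr)=M^{-1}=E^{(*)},
\]
and symmetrically $\bar A^{(*)}*_M A=M^{-1}$. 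These verifications are routine; the only point worth stressing is that $\det M\neq 0$ is used in every step, so the result genuinely requires restricting the ambient set to $GL_n(\Bbbk)$.

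For part 2, I would quote the proposition that $(M_n(\Bbbk),*_{s,t,M_1,M_2})$ is a multi-semigroup. Specialising to $*_M$ and $*_N$ with $M,N\in GL_n(\Bbbk)$ gives
\[
(A*_M B)*_N C=(AMB)NC=AMBNC=AM(BNC)=A*_M(B*_N C),
\]
for all $A,B,C\in GL_n(\Bbbk)$, which is exactly the multi-semigroup axiom. Combined with part 1, each $*_M$ is a group operation, so the system is a multi-group by definition.

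The main potential pitfall is not really mathematical but bookkeeping: one must be careful to state everything inside $GL_n(\Bbbk)$ rather than $M_n(\Bbbk)$, because in part 1 the unit element $M^{-1}$ does not exist without $\det M\neq 0$, and in part 2 the family of operations is indexed by invertible $M$ (so that each $(GL_n(\Bbbk),*_M)$ is a group). Once the ambient set is fixed as $GL_n(\Bbbk)$ and $M$ ranges over $GL_n(\Bbbk)$, both claims reduce to the one-line calculations above.
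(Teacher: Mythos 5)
Your proposal is correct and follows essentially the same route as the paper: the paper also obtains associativity as the special case $s=1$, $t=0$ of the general operation $*_{s,t,M_1,M_2}$, identifies the unit $E^{(*)}=M^{-1}$ by solving $A*X=A$, computes the inverse $M^{-1}A^{-1}M^{-1}$ from $AMY=M^{-1}$, and deduces the multi-group property from the multi-semigroup proposition. Your added remarks on closure and on checking the identity and inverse from both sides only make the argument more complete.
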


\bigskip

%%%%%%%%%%%%%%%%%%%%%%%%%%%%%%%%%%%%%%%%%%%%%%%%%

\section{Rack systems } \label{R}

Some examples of quandle systems and multi-quandles can be found in \cite{BF, T}. In this section we give some other examples. At first, recall basic definitions.

\begin{definition}[{\cite{M}, \cite{J}}] 
A {\it quandle} is a non-empty set $Q$ with a binary operation $(x,y) \mapsto x * y$ satisfying the following axioms:
\begin{enumerate}
\item[(Q1)] $x*x=x$ for all $x \in Q$,
\item[(Q2)] for any $x,y \in Q$ there exists a unique $z \in Q$ such that $x=z*y$,
\item[(Q3)] $(x*y)*z=(x*z) * (y*z)$ for all $x,y,z \in Q$.
\end{enumerate}

An algebraic system satisfying only (Q2) and (Q3)  is called a {\it rack}. Many interesting examples of quandles come from groups.
\end{definition}
\par

\begin{example}
\begin{enumerate}
\item If $G$ is a group, $m$ is an integer,  then the binary operation $a*_m b= b^{-m} \, a \,  b^m$ turns $G$ into a quandle $\Conj_m(G)$ called the $m$-{\it conjugation quandle} on~$G$. 
If $m=1$, this quandle is called {\it conjugation quandle} and is denoted $\Conj(G)$.
\item A group $G$ with the binary operation $a*b= b a^{-1} b$ turns the set $G$ into a quandle $\Core(G)$ called the {\it core quandle} of $G$. In particular, if $G= \mathbb{Z}_n$ is  the cyclic group of order $n$, then it is called the {\it dihedral quandle} and denoted by $\R_n$.
\item Let $G$ be a group and $\varphi \in \Aut(G)$. Then the set $G$ with binary operation $a *_{\varphi} b = \varphi(ab^{-1})b$ forms a quandle $\Alex(G,\ \varphi)$ referred as the  {\it generalized Alexander quandle} of $G$ with respect to $\varphi$.
\end{enumerate}
\end{example}

From the last example follows that  if $Q = \GL_n(\Bbbk)$, $\varphi \in \Aut(\GL_n(\Bbbk))$, then we can define a quandle system $(Q, *_{\varphi}, \varphi \in \Aut(GL_n(\Bbbk)))$.

In the present section we are studying the next question: What rack (quandle) systems one can defines on $V \times G$, where $V$ is a vector space of dimension $n$ over a field $\Bbbk$, $G$ is a subgroup of $\GL_n(\Bbbk)$.
On the set $Q=V \times G$ we
can define the operation
$$
(a, A)\circ (b, B)=(A b, \varphi(AB^{-1})B),~~~a, b \in V,~~A, B \in G,~~\varphi \in \Aut(G).
$$
In this case 
$$(a, A) \circ (a, A)=(A a, \varphi(E)A)=(A a, A).$$
It means that $A = E$, hence, $G = \{ E \}$ is the trivial group.

The second quandle  axiom:

$$
(u,X)\circ (a, A)=(b,B) \Leftrightarrow (u,X)\circ (a, A)=(Xa,\varphi(XA^{-1})A).
$$

Hence, 
$$
Xa = b,~~X=\varphi^{-1}(BA^{-1})A.
$$
It means that such element $(u, X)$ exists, but it is  not unique. 

Let us check the  third axiom:
$$
((a,A)\circ (b, B)) \circ (c, C)=((a,A)\circ (c, C)) \circ ((b,B) \circ (c, C)).
$$
The left hand side,
$$
((a,A)\circ (b, B)) \circ (c, C)=(Ab, \varphi(AB^{-1})B) \circ (c, C) =(\varphi (AB^{-1})Bc, \varphi(\varphi (AB^{-1})BC^{-1})C).
$$
The right hand  side
$$((a,A)\circ (c, C)) \circ ((b,B) \circ (c, C))=(Ac, \varphi(AC^{-1})C) \circ (Bc, \varphi(BC^{-1})C) =$$
$$= (\varphi (AC^{-1})CBc, \varphi(\varphi (AC^{-1})C).$$

We  have the system

$$\begin{cases} \varphi (AB^{-1})Bc= \varphi (AC^{-1})CBc;&\\ \varphi(\varphi (AB^{-1})BC^{-1})C= \varphi(\varphi (AC^{-1})C). &\end{cases} $$
Since, $A * B = \varphi (AB^{-1})B$ defines a quandle operation, the second equation is true.
Consider the first equation of the system. It equivalents to the equality
$$
\varphi (A B^{-1} CA^{-1}) = C,
$$
which must be true for arbitrary $A, B, C \in G$. Evidently, that for trivial group it is true.

Let us define the operation (see \cite{kinyon}):
$$
(a, A)\circ (b, B)=(A b, A B A^{-1}),~~~a, b \in V,~~A, B \in G, 
$$
and check the left self-distributivity,
$$
(a,A)\circ ((b, B) \circ (c, C))=((a,A)\circ (b, B)) \circ ((a,A) \circ (c, C)).
$$
Since
$$(a,A)\circ ((b, B) \circ (c, C))= (a, A)\circ (Bc, BCB^{-1})=(ABc, ABCB^{-1}A^{-1}) $$
and
$$((a,A)\circ (b, B)) \circ ((a,A) \circ (c, C))= (Ab,ABA^{-1}) \circ (Ac,ACA^{-1})=(ABc, ABCB^{-1}A^{-1}), $$
then the  left self-distributivity holds.

Let us take $n \in \mathbb{Z}$ and define more general operation,
$$
(a, A)\circ_n (b, B)=(A^n b, A^n B A^{-n}),~~~a, b \in V,~~A, B \in G.
$$
Check the left self-distributivity,
$$(a,A) \circ_n ((b, B)) \circ_n ((c,C))=(a,A) \circ_n (B^{n}c, B^{n}CB^{-n})=(A^{n}B^{n}c, A^{n}B^{n}CB^{-n}A^{-n}),$$
$$((a,A) \circ_n (b, B)) \circ_n ((a,A) \circ_n (c, C))=(A^{n}b,A^{n}BA^{-n})\circ_n  (A^{n}c,A^{n}CA^{-n})=(A^{n}B^{n}c, A^{n}B^{n}CB^{-n}A^{-n}).$$

Hence, the operation 
$(a,A)\circ_n (b, B)=(A^{n}b,A^{n}BA^{-n})$  is left self-distributive.

Let us check the left divisibility axiom: $(a,A)\circ_n (u, X)=(b,B)$
We have 
$$
(a,A)\circ_n (u, X)=(A^{n}u,A^{n}XA^{-n}).
$$ Hence, 
$$\begin{cases} u=A^{-n}b; &\\ X= A^{-n}BA^{n}.&\end{cases} $$
Since this system has unique solution, the left divisibility holds.

Summarize the previous calculations, we get

\begin{theorem}
Let $Q = (V, G)$, where $V$ is a vector space of dimension $n$ over a field $\Bbbk$, $G$ is a subgroup of $\GL_n(\Bbbk)$. Then the algebraic system
$(Q, *_n, n \in \mathbb{Z})$, where
$$
(a, A)\circ_n (b, B)=(A^n b, A^n B A^{-n}),~~~a, b \in V,~~A, B \in G
$$
satisfies the axioms:

1) Left self-distributivity,
$$
(a,A)\circ_n ((b, B) \circ_n (c, C))=((a,A)\circ_n (b, B)) \circ_n ((a,A) \circ_n (c, C)), ~~a, b, c \in V,~~A, B, C \in G.
$$

2) Left divisibility,

for any $(a, A)$, $(b, B) \in Q$ there is unique $(u, X) \in Q$ such that 
$$
(a, A) \circ_n (u, X) = (b, B). 
$$
\end{theorem}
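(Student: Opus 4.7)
The plan is to verify the two axioms by direct computation, since both reduce to routine group-theoretic manipulations inside $G$ once the operation is unpacked coordinatewise on $V \times G$.

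For left self-distributivity, I would expand both sides using the definition $(a,A)\circ_n (b,B)=(A^n b, A^n B A^{-n})$. On the left-hand side the inner product yields $(B^n c, B^n C B^{-n})$, and one more application gives $(A^n B^n c,\, A^n B^n C B^{-n} A^{-n})$. On the right-hand side I first form the two intermediate pairs $(A^n b, A^n B A^{-n})$ and $(A^n c, A^n C A^{-n})$, and then combine them. The only non-mechanical step is the identity $(A^n B A^{-n})^n = A^n B^n A^{-n}$, which holds because conjugation by $A^n$ is a group automorphism of $G$ and therefore commutes with $n$-th powers (for both positive and negative $n$, since $A \in \GL_n(\Bbbk)$ is invertible). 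Using this, the first coordinate of the right-hand side collapses to $(A^n B A^{-n})^n \cdot A^n c = A^n B^n c$, and the second coordinate to $(A^n B A^{-n})^n \cdot (A^n C A^{-n}) \cdot (A^n B A^{-n})^{-n} = A^n B^n C B^{-n} A^{-n}$, matching the left-hand side.

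For left divisibility, given $(a,A), (b,B) \in Q$, I would directly solve $(a,A)\circ_n (u,X)=(b,B)$. The definition gives the equivalent system $A^n u = b$ and $A^n X A^{-n} = B$. Since $A \in G \leq \GL_n(\Bbbk)$, the element $A^n$ is invertible, so each equation has a unique solution: $u = A^{-n} b \in V$ and $X = A^{-n} B A^n \in G$. This simultaneously shows existence and uniqueness of $(u,X)$.

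There is really no serious obstacle; the proof is a bookkeeping exercise. The only point that deserves explicit mention is the commutation of $n$-th powers with conjugation, which is what makes the self-distributivity work out cleanly across all $n \in \mathbb{Z}$ simultaneously. Both verifications already appear implicitly in the calculations leading up to the theorem statement, so the proof can essentially be written as ``follows from the computations above,'' with the identity $(A^n B A^{-n})^n = A^n B^n A^{-n}$ recorded as the key ingredient.
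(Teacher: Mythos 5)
Your proposal is correct and follows essentially the same direct computation as the paper: expand both sides of the self-distributivity identity coordinatewise and solve the two-equation system for left divisibility. The only difference is that you explicitly record the identity $(A^n B A^{-n})^n = A^n B^n A^{-n}$ (conjugation commuting with $n$-th powers), which the paper uses silently in its calculation; making it explicit is a small improvement in rigor but not a different argument.
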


From this theorem follows

\begin{corollary}
The algebraic system $(Q, *^{op}_n, n \in \mathbb{Z})$, where the  opposite operations are defined by the rules
$$
(a, A) *^{op}_n (b, B) = (b, B) *_n (a, A) 
$$
is a rack system.
\end{corollary}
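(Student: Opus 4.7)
The plan is to reduce the corollary to a purely formal observation about opposite operations: if a binary operation is left self-distributive and left-divisible, then its opposite is right self-distributive and right-divisible, which is precisely the pair of axioms (Q3) and (Q2) that define a rack.

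Concretely, I would fix $n \in \mathbb{Z}$ and verify that $(Q, *^{op}_n)$ satisfies (Q2) and (Q3), using the theorem for $\circ_n$. For right self-distributivity (Q3), I would start from the left self-distributivity of $\circ_n$,
\[
(a,A)\circ_n\bigl((b,B)\circ_n(c,C)\bigr)=\bigl((a,A)\circ_n(b,B)\bigr)\circ_n\bigl((a,A)\circ_n(c,C)\bigr),
\]
and rewrite each $\circ_n$ in terms of $*^{op}_n$ by swapping its arguments. Relabeling the variables, this yields exactly
\[
\bigl(x *^{op}_n y\bigr) *^{op}_n z = \bigl(x *^{op}_n z\bigr) *^{op}_n \bigl(y *^{op}_n z\bigr),
\]
i.e.\ (Q3) for $*^{op}_n$. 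For (Q2), given $(b,B), (a,A) \in Q$ I need a unique $(u,X)$ with $(u,X) *^{op}_n (a,A) = (b,B)$, which by definition is $(a,A) \circ_n (u,X) = (b,B)$; existence and uniqueness then come directly from the left divisibility clause of the theorem.

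Since both verifications hold for every $n\in\mathbb{Z}$, each $(Q,*^{op}_n)$ is a rack, so $(Q,*^{op}_n,\,n\in\mathbb{Z})$ is a rack system in the sense of Section~\ref{R}. There is no real obstacle here: the only thing to be slightly careful about is keeping track of the order of arguments when dualizing, since the variables in left self-distributivity play a different role than in right self-distributivity, so the relabeling step should be written out explicitly to avoid a bookkeeping slip.
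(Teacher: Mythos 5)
Your proposal is correct and matches the paper's intent: the paper derives the corollary directly from the theorem with no written proof, and your dualization argument (left self-distributivity and left divisibility of $\circ_n$ become exactly (Q3) and (Q2) for the opposite operation, after relabeling variables) is precisely the standard argument being invoked. No gaps.
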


\bigskip

%%%%%%%%%%%%%%%%%%%%%%%%%%%%%%%%%%%%%%%%%%%%%%%%

\section{Connection between skew braces and dimonoids} \label{dim}

In this section we find some connections between skew braces and dimonoids.

\begin{proposition}
Let $(G, \cdot)$ be a group. 

1) If $a\circ b=a\cdot b$ then we get a skew brace. If $a \vdash b = a \dashv b=ab$ then we get a dimonoid.

2) If $a\circ b=ba$, then $(G,\cdot,\circ)$ is  skew brace. If   $a \vdash b =ab$ and $a \dashv b=ba$ then  $(G, \dashv , \vdash )$ is  not a dimonoid.

\end{proposition}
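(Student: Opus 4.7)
The plan is to verify each of the four sub-claims by direct substitution into the respective defining axiom, using only associativity of the ambient group operation $\cdot$ and, for part~(2), the existence of noncommuting elements.

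For part~(1), the skew brace case: substituting $\circ = \cdot$ into the skew brace identity $g_1 \circ (g_2 \cdot g_3) = (g_1 \circ g_2) \cdot g_1^{-1} \cdot (g_1 \circ g_3)$ turns the left hand side into $g_1 g_2 g_3$ and the right hand side into $g_1 g_2 g_1^{-1} g_1 g_3$, which collapses to $g_1 g_2 g_3$ after cancelling $g_1^{-1} g_1$; this yields the ``trivial'' skew brace on $(G,\cdot)$. For the dimonoid case with $\vdash = \dashv = \cdot$, each of the five equalities labelled (1)--(5) reduces to $xyz = xyz$ by associativity, so all hold simultaneously.

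For part~(2), first observe that $(G, \circ)$ with $a \circ b := b \cdot a$ is a group (the opposite group of $(G, \cdot)$, with the same unit and the same inverses), so the data $(G,\cdot,\circ)$ at least fits the template of a skew brace. I then substitute into the skew brace axiom: the left hand side $g_1 \circ (g_2 g_3)$ equals $(g_2 g_3) g_1 = g_2 g_3 g_1$, while the right hand side $(g_1 \circ g_2) \cdot g_1^{-1} \cdot (g_1 \circ g_3)$ equals $(g_2 g_1) \cdot g_1^{-1} \cdot (g_3 g_1) = g_2 g_3 g_1$, and these agree.

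To show that the pair $a \vdash b := ab$, $a \dashv b := ba$ does not in general define a dimonoid on $G$, I test relation~(2), namely $(x \dashv y) \dashv z = x \dashv (y \vdash z)$: the left hand side evaluates to $z(yx) = zyx$ whereas the right hand side is $(yz)x = yzx$, so the axiom forces $yz = zy$ for all $y, z \in G$. Hence whenever $G$ is nonabelian this relation fails, and a concrete witness is any pair of non-commuting elements in, say, $S_3$. The only mild obstacle is that the statement is formally vacuous for abelian $G$ (where in fact part~(1) applies and one does get a dimonoid), so the claim should be read as ``not a dimonoid for general $G$'', and exhibiting a single nonabelian counterexample is the cleanest way to make this precise.
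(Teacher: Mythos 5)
Your proposal is correct and follows essentially the same route as the paper: direct substitution into the defining axioms, with the key observation that the dimonoid relation $(x \dashv y) \dashv z = x \dashv (y \vdash z)$ forces $yz = zy$, so the structure in part (2) fails to be a dimonoid precisely when $G$ is nonabelian. You are in fact more complete than the paper's own proof, which only treats the dimonoid claims and never verifies the skew brace identity for either part; your explicit checks of $g_1 \circ (g_2 \cdot g_3) = (g_1 \circ g_2)\cdot g_1^{-1}\cdot(g_1 \circ g_3)$ and your remark that the non-dimonoid claim must be read as ``not in general'' (with a concrete nonabelian witness) fill genuine gaps in the published argument.
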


\begin{proof}

The  binary operation $ \vdash$ is associative since it corresponds to the product in the group~$G$.

Let us check the following axiom:
$$ (a \dashv b) \dashv  c =a  \dashv (b \dashv c).$$
To do it, we have to  compute both sides of the equation:
$$(ba)  \dashv c=c(ba),$$
$$a \dashv (cb)=(cb)a.$$
Since they are the same, the operation is associative.

Let us check the following axiom:
$a \dashv  (b \dashv  c) =a \dashv (b \vdash c)$.
We have $cba=bca$. Therefore, it must satisfy $bc=cb$ and this group is  an abelian group.
It means that if $a \dashv b= a \vdash b = ab$, then $(G,\vdash,\dashv)$ is a dimonoid.

\end{proof}

If we a have a  skew brace $(G,\cdot,\circ)$, then we can define operations  
 $a \vdash b = ab,  a \dashv b = a\circ b$ and formulate the question: is $(G,\vdash, \dashv)$ a dimonoid?

The next example shows that in the general case the answer is negative.

\begin{example}
Let us take the brace   $(\mathbb{Z},+,\circ)$, where $(\mathbb{Z},+)$ is the infinite cyclic group and $a\circ b=a+(-1)^{a}b$, $a, b \in \mathbb{Z}$.
Note that 
$$a\circ b=a+(-1)^{a}b=\begin{cases}a+b,&if\  a\  is\  even; \\ a-b,&  if\  a\  is\ odd. \end{cases} 
$$

Put
$$
a \vdash b = a+b,  a \dashv b = a\circ b.
$$

It is evident that the associativity holds for the binary operations $\vdash$ and $\dashv$.
Let us check that 
$$a \dashv (b \dashv c)=a \dashv (b \vdash c). $$

We have that 
$$a \circ (b \circ c)= a \circ \begin{cases} b+c,&if\  b\  is\  even; \\ b-c,&  if\  b\  is\ odd.\end{cases} =  \begin{cases}a+b+c&if \ a,b\  are\  even; \\ a-b-c& if  \ a \  is\ odd \ and \ b\  is\  even; \\ a+b-c&  if  \ a \  is\ even \ and \ b\  is\  odd ;  \\ a-b+c& 
 if \ a,b\  are\  odd.\end{cases} $$

On the other side we get
$$
a \circ (b + c)= \begin{cases} a+b+c,&if\  a\  is\  even; \\ a-b-c,&  if\  a\  is\ odd.\end{cases} 
$$
If we  take $a = 2$, $b = 3$, $c = 4$,  then $a \dashv (b \dashv c)= a+b-c=1. $ On the other side
$a \dashv (b \vdash c) =  a+b+c =9. $

Therefore, the skew brace  $(\mathbb{Z},+,\circ)$ is not a dimonoid.

\end{example}

At the end we formulate the following question.

\begin{question}
Under which conditions a skew brace $(G,\cdot,\circ)$ is a dimonoid with respect to the operations
 $a \vdash b = ab,  a \dashv b = a\circ b?$
\end{question}

\begin{question}
Let $\mathcal{G} = (G, *_i, i \in I)$ be a semigroup system. Define a product of semigroup operations,
$$
g (*_i *_j) h = (g *_i h) *_j h,~~g, h \in G.
$$
Find necessary and sufficient conditions under which $(Q, *_i *_j)$ is a semigroup.

\end{question}

%%%%%%%%%%%%%%%%%%%%%%%%%%%%%%%%%%%%%%

%\end{document}

\begin{ack}
This work was supported by the Ministry of Science and Higher Education of Russia (agreement No. 075-02-2023-943).
\end{ack}
\medskip

\end{document}